\documentclass[11pt, oneside]{article}   

\usepackage{geometry}    
\geometry{letterpaper}  
\usepackage{graphicx}
\usepackage{amssymb}
\usepackage{amsthm}
\usepackage{rotating}
\newtheorem{thm}{Theorem}[section]

\newtheorem{cor}[thm]{Corollary}
\newtheorem{lem}[thm]{Lemma}
\theoremstyle{definition}

%%%%%%%%%%%%%%%%%%%%%%%%%%%%%%%%%%%%%%%%%%%%%%%%%%%%%%%%%%%%%%%%%%%%%%%%%%%

\def\Core{{\mathrm{Core}}}

\def\GAP{{\sf GAP}}

\def\C{{\mathbb{C}}}
\def\Irr{\mbox{\rm Irr}}

%%%%%%%%%%%%%%%%%%%%%%%%%%%%%%%%%%%%%%%%%%%%%%%%%%%%%%%%%%%%%%%%%%%%%%%%%%%%%
\title{Subgroups of arbitrary even ordinary depth}
\author{Hayder Abbas Janabi, Thomas Breuer, and Erzs\'ebet Horv\'ath}
\date{June 16, 2020}  

\begin{document}

\maketitle

%%%%%%%%%%%%%%%%%%%%%%%%%%%%%%%%%%%%%%%%%%%%%%%%%%%%%%%%%%%%%%%%%%%%%%%%%%%%%
\begin{abstract}
\noindent
We show that for each positive integer $n$,
there are a group $G$ and a subgroup $H$ such that the ordinary depth
$d(H, G)$ is $2n$.
This solves the open problem posed by Lars Kadison
whether even ordinary depth larger than $6$ can occur.

\vspace*{1mm}

\noindent
{\bf Mathematics Subject Classification (2010):}
primary 20C15; secondary 20B35, 05C12, 05C76.

\vspace*{1mm}

\noindent
{\bf Keywords:} ordinary depth of a subgroup, distance of characters, Cartesian product of graphs, wreath product.
\end{abstract}

%%%%%%%%%%%%%%%%%%%%%%%%%%%%%%%%%%%%%%%%%%%%%%%%%%%%%%%%%%%%%%%%%%%%%%%%%%%%%
\section{Introduction}

The notion of  depth was originally defined for von-Neumann algebras,
see~\cite{GHJ}.
Later it was also defined for Hopf algebras, see~\cite{KN}.
For some recent results in this direction, see~\cite{HKY,K,KHSZ}.
In~\cite{KK} and later in~\cite{BKK}, the depth of semisimple algebra inclusions was studied, by Burciu, Kadison and K\"ulshammer.
 First results were considering the depth $2$ case, later it was generalized for arbitrary $n$.
In the case of group algebra inclusion ${\C}G\subseteq {\C}H$ it was shown that the depth is at most $2$ if and only if $H$ is normal in $G$, see~\cite{KK}.

Now let us remind the reader to the notion of \emph{ordinary depth} of a subgroup
 $H$ in the finite group $G$.
We say that the \emph{ depth of the group algebra inclusion } ${\C}H \subseteq {\C}G$ is $2n$
if ${\C}G\otimes_{{\C}H}\cdots\otimes_{{\C}H}{{\C}G}$ ($n+1$-times ${\C}G$)
 is isomorphic to a direct summand of
$\oplus_{i=1}^a{\C}G\otimes_{{\C}H}\cdots \otimes_{{\C}H}{\C}G$ ($n$ times ${\C}G$)
as ${\C}G-{\C}H$-bimodules  (or  equivalently as ${\C}H-{\C}G$-bimodules) for some positive integer $a$.

Furthermore, ${\C}H$ is said to have depth $2n+1$ in ${\C}G$ if the same assertion holds for  ${\C}H-{\C}H$-bimodules. Finally ${\C}H$ has depth $1$ in ${\C}G$ if
${\C}G$ is isomorphic to a direct summand of $\oplus_{i=1}^a{\C}H$ as
 ${\C}H-{\C}H$ bimodules.
The minimal depth of group algebra inclusion ${\C}H\subseteq {\C}G$ is called
(minimal) \emph{ordinary depth of $H$ in $G$}, which we denote by $d(H,G)$.
 If one considers  group algebras over a field of characterstic $p$ then one gets in a similar way the notion of modular depth and it does not depend on the field,  only on the characteristic, as it is shown in \cite[Remark 4.5]{BDK} by Boltje, Danz and K\"ulshammer.

  The ordinary depth can be obtained from
the so called \emph{inclusion  or Frobenius matrix} $M$. If $\chi_1,\ldots ,\chi_s$ are
all irreducible characters of $G$ and $\psi_1,\ldots ,\psi_r$ are all irreducible characters of $H$, then $m_{i,j}:=(\psi_i^G,\chi_j)$.
The "powers" of $M$ are defined by $M^{2l}:=M^{2l-1}M^T$ and
 $M^{2l+1}:=M^{2l}M$.
The ordinary depth $d(H,G)$ can be obtained as the smallest integer $n$
such that $M^{n+1}\leq aM^{n-1}$ for some positive integer $a$, where the inequality of matrices means that this inequality holds componentwise.

The  results on characters in \cite{BKK}   help
to determine $d(H,G)$. Two irreducible characters $\alpha,\beta \in \Irr(H)$ are called \emph{related},
$\alpha \sim_G \beta$, if they are constituents of $\chi_H$, for some
 $\chi \in \Irr(G)$. The \emph{distance} $d_G(\alpha,\beta)=m$
is the smallest integer $m$ such that there is a chain of irreducible
 characters of $H$ such that $\alpha=\psi_0\sim_G \psi_1\ldots \sim_G
\psi_m=\beta $. If there is no such chain then
$d_G(\alpha,\beta)=-\infty $ and if $\alpha=\beta $ then the distance is zero.
If $X$ is the set of irreducible constituents of $\chi_H$ then
we set
$m(\chi):= \max\{\min\{d_G(\alpha,\psi); \psi\in X\}; \alpha\in\Irr(H)\}$.
We will use the following result from \cite{BKK}.

\begin{thm}\cite[Thm~3.6, Thm~ 3.10]{BKK}% \label{distance}
\begin{itemize}
\item[(i)] Let $m\geq 1$. Then $H$ has ordinary depth $\leq 2m+1$ in $G$
if and only if the distance between two irreducible characters of $H$ is at most $m$.
\item[ (ii)] Let $m\geq 2$. Then $H$ has ordinary depth $\leq 2m$ in $G$
if and only if $m(\chi)\leq m-1$ for all $\chi \in \Irr(G)$.
\end{itemize}
\end{thm}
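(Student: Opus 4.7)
The plan is to interpret the entries of the matrix powers $M^n$ combinatorially via walks in a bipartite graph $\mathcal{B}$ on the vertex set $\Irr(H) \sqcup \Irr(G)$, where $\psi \in \Irr(H)$ is joined to $\chi \in \Irr(G)$ precisely when $(\psi^G,\chi) > 0$, i.e.\ when $\psi$ is a constituent of $\chi_H$. A direct induction on $n$ from the defining recursions $M^{2l} = M^{2l-1}M^T$ and $M^{2l+1} = M^{2l}M$ shows that $(M^{2l})_{i,j}$ is a weighted count of length-$2l$ walks in $\mathcal{B}$ from $\psi_i$ to $\psi_j$, while $(M^{2l+1})_{i,k}$ counts length-$(2l+1)$ walks from $\psi_i$ to $\chi_k$. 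Since $\mathcal{B}$ is bipartite and every vertex has a neighbor, a walk may always be lengthened by $2$ through a back-and-forth step; thus the support of $M^l$ is exactly the set of vertex pairs of the appropriate parity at $\mathcal{B}$-distance $\leq l$, and in particular $\mathrm{supp}(M^{n-1}) \subseteq \mathrm{supp}(M^{n+1})$.

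Next, because $M$ has nonnegative integer entries and the matrices are finite, the inequality $M^{n+1} \leq a M^{n-1}$ holds for some positive integer $a$ iff $\mathrm{supp}(M^{n+1}) \subseteq \mathrm{supp}(M^{n-1})$: if containment fails at some entry no $a$ suffices, and if it holds one may take $a$ to be the maximum of finitely many ratios. Combined with the nesting above, the depth condition $d(H,G) \leq n$ becomes $\mathrm{supp}(M^{n+1}) = \mathrm{supp}(M^{n-1})$, i.e.\ the support has stabilised at step $n-1$.

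Specialising to $n = 2m+1$ for part~(i), the condition says that no $\Irr(H)$-pair lies at $\mathcal{B}$-distance exactly $2m+2$. A geodesic-prefix argument (given a pair at $\mathcal{B}$-distance $d > 2m$, take a shortest walk $v_0,\dots,v_d$; then $(v_0, v_{2m+2})$ is an $\Irr(H)$-pair at $\mathcal{B}$-distance exactly $2m+2$) shows this is equivalent to every connected $\Irr(H)$-pair lying at $\mathcal{B}$-distance $\leq 2m$. Since a chain $\alpha = \psi_0 \sim_G \cdots \sim_G \psi_k = \beta$ of length $k$ corresponds to an alternating $\mathcal{B}$-walk $\alpha - \chi'_1 - \psi_1 - \cdots - \chi'_k - \beta$ of length $2k$, we have $d_G(\alpha,\beta) = \frac{1}{2} d_{\mathcal{B}}(\alpha,\beta)$ whenever finite, so the condition reads $d_G(\alpha,\beta) \leq m$ for every pair $\alpha,\beta \in \Irr(H)$.

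Taking $n = 2m$ for part~(ii), the analogous support condition says every connected $(\alpha,\chi) \in \Irr(H) \times \Irr(G)$ has $\mathcal{B}$-distance $\leq 2m-1$. The penultimate vertex on a shortest such walk is some $\psi \in X$ (the constituents of $\chi_H$), and $\alpha$ then lies at $\mathcal{B}$-distance $\leq 2m-2$ from $\psi$, giving $d_G(\alpha,\psi) \leq m-1$; conversely any $\psi \in X$ is at $\mathcal{B}$-distance $1$ from $\chi$, so an $\alpha$ with $d_G(\alpha,\psi) \leq m-1$ sits at $\mathcal{B}$-distance $\leq 2m-1$ from $\chi$. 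Thus the support condition becomes $\min_{\psi \in X} d_G(\alpha,\psi) \leq m-1$ for every $\chi \in \Irr(G)$ and every $\alpha \in \Irr(H)$, which is exactly $m(\chi) \leq m-1$. The main obstacle I anticipate is the first step, namely verifying that the defining matrix products really do enumerate walks in $\mathcal{B}$ with the correct combinatorics and handling the back-and-forth extension carefully; pairs in distinct components of $\mathcal{B}$ present no difficulty because the $-\infty$ convention makes the claimed bounds vacuously hold there.
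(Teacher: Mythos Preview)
The paper itself does not prove this theorem; it is quoted from \cite{BKK} (their Theorems~3.6 and~3.10) and used as a black box, so there is no paper-side argument to compare against.

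Your proof is correct. Reading the entries of $M^{n}$ as weighted walk counts in the bipartite graph $\mathcal{B}$, reducing the inequality $M^{n+1}\leq aM^{n-1}$ to the support containment $\mathrm{supp}(M^{n+1})\subseteq\mathrm{supp}(M^{n-1})$, and then translating this into distance statements in $\mathcal{B}$ (and, via $d_G(\alpha,\beta)=\tfrac12\,d_{\mathcal B}(\alpha,\beta)$ on $\Irr(H)$, into statements about $d_G$ and $m(\chi)$) is exactly the right picture; the geodesic-prefix reductions in both parts are sound, and the $-\infty$ convention handles disconnected pairs as you say.

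The one step you pass over a little quickly is the claim that ``$d(H,G)\le n$'' is equivalent to the \emph{single} support equality $\mathrm{supp}(M^{n+1})=\mathrm{supp}(M^{n-1})$. A priori the depth condition only says the equality holds at \emph{some} $k\le n$, and one needs that stabilisation at step $k$ forces stabilisation at step $k+1$ as well (crossing parity, not just $k\mapsto k+2$). But this follows from the same geodesic-prefix trick you use later: a vertex pair of the other type at $\mathcal{B}$-distance exactly $k+2$ would, via its penultimate vertex, produce a pair of the original type at distance exactly $k+1$. With that remark made explicit, both (i) and (ii) go through as written.
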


Thus we have the following.

\begin{cor}\label{distance}
Let $H$ be a subgroup of a finite group $G$.
The \emph{ordinary depth} $d(H,G)$ is the minimal possible positive integer
which can be determined from the following upper bounds:
                              
\begin{itemize}
\item[(i)]
   For $m \geq 1$, $d(H,G) \leq 2m+1$
   if and only if the distance between two irreducible characters of $H$
   is at most $m$.
\item[(ii)]
   For $m \geq 2$, $d(H,G) \leq 2m$
   if and only if $m(\chi) \leq m-1$ for all $\chi \in \Irr(G)$.
\item[(iii)]
   $d(H,G) \leq 2$ if and only if $H$ is normal in $G$,
\item[(iv)]   $d(H,G)=1$ if and only if $G = H C_G(x)$ for all $x \in H$,
   see \cite[Thm. 1.7 ]{BK}.
\end{itemize}
\end{cor}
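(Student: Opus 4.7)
The plan is to recognize that this corollary is essentially a bookkeeping statement: since $d(H,G)$ is a positive integer, determining it reduces to finding the smallest integer $k$ such that one of the listed upper-bound criteria $d(H,G) \leq k$ holds. So the work amounts to collecting the relevant criteria from the literature for each value of $k$ and checking that together they cover every positive integer.

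First I would simply quote parts (i) and (ii) verbatim from the preceding Theorem 1.1, i.e.\ \cite[Thm.~3.6, Thm.~3.10]{BKK}. These already handle every value $d(H,G) \leq 2m+1$ for $m \geq 1$ and every value $d(H,G) \leq 2m$ for $m \geq 2$, so together they cover all upper bounds $\leq k$ for $k \geq 3$. The only gap is therefore $k \in \{1,2\}$, which corresponds to the small-depth cases that are excluded by the hypotheses $m \geq 1$ in (i) and $m \geq 2$ in (ii).

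To fill in $k = 2$, I would invoke the characterization from \cite{KK} stated in the introduction: for the group algebra inclusion $\mathbb{C}H \subseteq \mathbb{C}G$, depth at most $2$ is equivalent to $H \trianglelefteq G$, giving (iii). For $k = 1$, I would cite \cite[Thm.~1.7]{BK} directly, which gives the centralizer condition $G = H C_G(x)$ for every $x \in H$, yielding (iv).

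The only thing to be careful about is consistency: one must check that (iii) is genuinely compatible with the other bounds (normality implies the distance between any two irreducible characters of $H$ is at most $1$, so (i) with $m=1$ gives $d(H,G) \leq 3$, which is consistent with $d(H,G) \leq 2$) and similarly that the depth-$1$ condition in (iv) implies normality. There is no real obstacle here; the whole statement is an organizational corollary assembling results of \cite{BKK}, \cite{KK}, and \cite{BK} into a single decision procedure for $d(H,G)$.
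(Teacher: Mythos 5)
Your proposal matches the paper exactly: the corollary is stated there without a separate proof, being just the assembly of Theorem~3.6 and Theorem~3.10 of \cite{BKK} for parts (i) and (ii), the normality criterion of \cite{KK} for part (iii), and \cite[Thm.~1.7]{BK} for part (iv), with $d(H,G)$ read off as the smallest integer for which one of these bounds applies. Your added consistency remarks are harmless but not needed.
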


We will also use the following result from \cite{BKK}.

\begin{thm}\cite[Thm~6.9]{BKK}\label{inter}
Suppose that $H$ is a subgroup of a finite group $G$ and $N = \Core_G(H)$
is the intersection of $m$ conjugates of $H$.
Then $d(H,G) \leq 2m$.
If additionally $N \leq Z(G)$ holds then $d(H,G) \leq 2m-1$.
\end{thm}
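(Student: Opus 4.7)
My plan is to reduce the depth bound to a chain-of-characters condition via Corollary~\ref{distance} and then construct the chain explicitly using the conjugating elements. For part~(a) with $m \ge 2$, Corollary~\ref{distance}(ii) shows $d(H,G) \le 2m$ provided that for every $\chi \in \Irr(G)$ and every $\alpha \in \Irr(H)$, some constituent $\psi$ of $\chi_H$ satisfies $d_G(\alpha,\psi) \le m-1$; the case $m = 1$ is immediate, since $N = H$ forces $H \trianglelefteq G$ and Corollary~\ref{distance}(iii) gives $d(H,G) \le 2$. For part~(b) with $m \ge 2$, Corollary~\ref{distance}(i) reduces the bound $d(H,G) \le 2m-1$ to bounding the distance between any two related characters in $\Irr(H)$ by $m-1$; the case $m = 1$ gives $H \le Z(G)$ and hence $d(H,G) = 1$ by Corollary~\ref{distance}(iv).

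The key combinatorial tool is the reformulation of the relation via Frobenius reciprocity together with Mackey's formula: $\alpha \sim_G \beta$ if and only if $\beta$ is a constituent of
$$(\alpha^G)_H \;=\; \sum_{g \in H\backslash G/H} \bigl(\alpha^g|_{H \cap H^g}\bigr)^H.$$
Hence a single step in the relation graph starting from $\alpha$ can be realized by choosing any $g \in G$ and any irreducible constituent $\beta$ of $(\alpha^g|_{H \cap H^g})^H$. Writing $N = H \cap H^{g_2} \cap \cdots \cap H^{g_m}$ (after conjugating we may take $g_1 = 1$), the plan is to iterate this construction with $g_2, \ldots, g_m$, so that after $k$ steps one obtains a character $\alpha_k$ connected to $\alpha$ by a chain of length $k$ whose restriction to the growing intersection $H \cap H^{g_2} \cap \cdots \cap H^{g_{k+1}}$ shares an irreducible constituent with the appropriate restriction of $\alpha$. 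After $m - 1$ steps the innermost intersection has collapsed to $N$.

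The final hop uses Clifford theory for $N \trianglelefteq G$: for any $\chi \in \Irr(G)$, the irreducible constituents of $\chi_N$ form a single $G$-orbit, and any constituent of $\chi_H$ restricts to $N$ into that same orbit; conversely, any irreducible character of $H$ whose restriction to $N$ meets this orbit is related to some constituent of $\chi_H$. This lets the chain terminate at a constituent of $\chi_H$ after $m-1$ steps. The main obstacle I anticipate is precisely this Mackey/Clifford bookkeeping --- ensuring at each iteration that the restriction of $\alpha_k$ still carries a common irreducible constituent at the deeper intersection, so that the chain propagates and finally meets $\chi_H$. For part~(b), the assumption $N \le Z(G)$ makes every irreducible character of $N$ linear and $G$-invariant; Clifford theory at level $N$ becomes essentially trivial, the final Clifford step merges with the last iteration, saving one link in the chain, and the improved bound $d(H,G) \le 2m-1$ follows.
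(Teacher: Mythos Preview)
The paper does not contain a proof of this theorem at all: Theorem~\ref{inter} is quoted verbatim from \cite[Thm.~6.9]{BKK} and used as a black box, so there is no ``paper's own proof'' to compare your attempt against.

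That said, your sketch points in the right direction---iterated Mackey plus Clifford theory over $N=\Core_G(H)$ is indeed the mechanism behind the result---but as written it has a genuine gap in the step count and in the role of $\chi$. You propose $m-1$ iterative steps using $g_2,\ldots,g_m$, producing $\alpha_{m-1}$ whose restriction to $N$ is controlled relative to $\alpha$; then you invoke a ``final hop'' via Clifford to reach a constituent of $\chi_H$. But that final hop is itself a $\sim$-step, so your chain has length $m$, not $m-1$, and you would only get $d(H,G)\le 2m+2$. Moreover, throughout the iteration your $\alpha_k$'s are built from $\alpha$ and the $g_i$'s alone; $\chi$ never enters until the last moment, and you give no argument why $\alpha_{m-1}|_N$ should meet the particular $G$-orbit supporting $\chi_N$. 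In fact it need not: already for $H=D_8\le S_4=G$ with $m=2$, the $2$-dimensional $\rho\in\Irr(D_8)$ and $1_H$ lie over different $G$-orbits in $\Irr(V_4)$ and are at distance $-\infty$. The point you are missing is that $d_G(\alpha,\psi)=-\infty$ is permitted in the definition of $m(\chi)$, so one only has to bound distances between characters of $H$ lying over the \emph{same} $G$-orbit in $\Irr(N)$; once this is recognised, the iterated Mackey expansion of $\bigl((\cdots(\alpha^G)_H\cdots)^G\bigr)_H$ over intersections of conjugates of $H$ does give the bound, but the bookkeeping you defer is precisely the content of the proof.
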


In recent publications, several authors determined the ordinary depth of subgroups in some special series of groups,  e.g. $PSL(2,q)$, Suzuki groups,
Ree groups,  symmetric and alternating groups, see \cite{BKK}, \cite{Frit}, \cite{FKR}, \cite{HHP2015}, \cite{HHP2019}. In \cite{Danz}, twisted group algebra inclusions for symmetric and alternating groups are studied.

It is known that odd ordinary depth of a subgroup in a finite group
can be arbitrarily large:
It is shown in~\cite{BKK} that the depth of the symmetric group
$S_n$ in $S_{n+1}$ is $2n-1$.

Lars Kadison posed the following  open problem on his homepage, see~\cite{Khp}:
Are there subgroups of ordinary depth $2n$ where $n > 3$?

If one looks at the results of the above papers or the calculations presented
in ~\cite{Birmingham2017}, one has the impression that in most cases the depth of subgroups is odd. However still one can find examples of arbitrarily large even depth.
In our examples wreath products will  play an  important role.
There is another depth concept, the combinatorial depth, which is considered
in several investigations, see e.g. \cite{BDK}, \cite{HHP2015}, \cite{HHP2019}.
In this short note we will always consider ordinary depth, so in the following depth will always mean ordinary depth.

The main result of this paper is the following.

\begin{thm}\label{2n}  There exists a series of groups and subgroups $(G_n,H_n)$ such
that
$d(H_n,G_n)=2n$ for every positive integer $n$.
\end{thm}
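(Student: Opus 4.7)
My plan is to construct $(G_n, H_n)$ via a wreath-product construction whose character-relation graph on $\Irr(H_n)$ contains a Cartesian product of $n$ copies of a fixed base graph. Because the distance function on a Cartesian product of graphs is additive in the coordinates, this will push the diameter---and hence, by Corollary~\ref{distance}, the depth---up linearly in $n$.

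First I would fix a small base pair $H_0 \leq G_0$ together with two irreducible characters $\alpha_0, \beta_0 \in \Irr(H_0)$ that lie at a prescribed positive distance in the related-graph of $(H_0, G_0)$. Such a pair can be located among small symmetric, alternating or linear groups, and its entire character-relation graph can be computed once and for all, or read off from previous work such as \cite{BKK} and the tables in \cite{Birmingham2017}. Then for each positive integer $n$ I would take $G_n := G_0 \wr S_n$ and $H_n := H_0 \wr S_n$, or a close variant in which $S_n$ is replaced by a smaller transitive permutation group on $n$ points if that turns out to be necessary for parity reasons. Using Clifford theory for wreath products, $\Irr(H_n)$ is parameterized by $S_n$-orbits of $n$-tuples in $\Irr(H_0)$ together with characters of the corresponding inertia subgroups, and similarly for $\Irr(G_n)$. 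Frobenius reciprocity inside the wreath structure should then show that two irreducible characters of $H_n$ are related through $G_n$ precisely when, coordinate by coordinate, the underlying characters of $H_0$ are related through $G_0$, so that the related-graph of $(H_n, G_n)$ contains the $n$-fold Cartesian product of the base related-graph.

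With this graph identification in place, the lower bound comes from the ``diagonal'' characters built from $\alpha_0$ and $\beta_0$: by additivity of distance in a Cartesian product, the resulting pair $\alpha, \beta \in \Irr(H_n)$ satisfies $d_{G_n}(\alpha, \beta) \geq n \cdot d_{G_0}(\alpha_0, \beta_0) \geq n$, so Corollary~\ref{distance}(i) rules out $d(H_n, G_n) \leq 2n-1$. For the matching upper bound $d(H_n, G_n) \leq 2n$ I would apply Corollary~\ref{distance}(ii) and verify that $m(\chi) \leq n-1$ holds for every $\chi \in \Irr(G_n)$, again reducing coordinate-wise to the (much smaller) analogous statement for the base pair.

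The main obstacle is proving that distances in the related-graph of $(H_n, G_n)$ do not \emph{collapse} relative to the pure Cartesian product; this demands a precise Clifford-theoretic description of $\chi|_{H_n}$ for \emph{every} $\chi \in \Irr(G_n)$, not merely the diagonal ones, together with careful bookkeeping of how $S_n$ permutes the coordinate tuples and their inertia data. A secondary difficulty is arranging the parity in Corollary~\ref{distance}(ii) so that the eccentricity bound is exactly $n-1$ and not $n$, landing the depth on the even value $2n$ rather than overshooting to $2n+1$. If the naive wreath product overshoots, my fallback is to replace $S_n$ on top by a carefully chosen transitive subgroup so that the eccentricity is trimmed by one without disturbing the diagonal pair that realizes the lower bound.
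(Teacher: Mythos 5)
Your overall strategy (wreath products, a product-of-graphs picture of the relation $\sim_{G_n}$, lower bound via Corollary~\ref{distance}(i)) is in the right spirit, but the specific construction you propose does not work, and the failure is exactly at the point you flag as ``the main obstacle.'' If you take $H_n = H_0 \wr S_n$ inside $G_n = G_0 \wr S_n$ (or with any transitive top group), then on the level of the base groups $H_0^n \leq G_0^n$ an irreducible character of $G_0^n$ is an outer product $\chi_1 \times \cdots \times \chi_n$, and its restriction to $H_0^n$ is $\chi_1|_{H_0} \times \cdots \times \chi_n|_{H_0}$. Hence two characters $\alpha_1 \times \cdots \times \alpha_n$ and $\beta_1 \times \cdots \times \beta_n$ of $H_0^n$ are related precisely when $\alpha_i \sim_{G_0} \beta_i$ for \emph{every} $i$ simultaneously. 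This is the tensor/strong product of the base relation graph, not the Cartesian product: a single step may change all coordinates at once, so the distance in the product is the \emph{maximum} of the coordinate distances, not their sum. Passing to the full wreath products only groups characters into orbits and adds inertia data; it does not forbid simultaneous moves. So your claimed lower bound $d_{G_n}(\alpha,\beta) \geq n \cdot d_{G_0}(\alpha_0,\beta_0)$ collapses to a bound independent of $n$, and no amount of bookkeeping of inertia subgroups rescues it, because the collapse is structural, not a matter of precision.

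The paper avoids this by \emph{not} taking $H_n$ to be a wreath product. With $G = S_4$, $N = V_4$, it sets $G_n \cong G \wr C_n$ but $H_n \cong N \times G^{n-1}$, i.e., only one coordinate is shrunk to $N$ and the other $n-1$ coordinates remain full copies of $G$. Restricting a relevant irreducible character of $G_n$ to the base group $G^n$ gives the cyclic shifts of a product with exactly one faithful component; restricting further to $H_n$, branching can occur only in the single $N$-coordinate (restriction of $\Irr(G)$ to $G$ itself is rigid), and matching the position of the faithful component forces two related characters to agree in all but one coordinate. This is what produces a genuine Cartesian product $\Gamma_n$ (one edge times $n-1$ triangles) and hence additive distances, with the explicit pair $\nu_2 \times \chi_1 \times \cdots \times \chi_1$ and $\nu_2 \times \chi_2 \times \cdots \times \chi_2$ at distance exactly $n$. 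Also note the upper bound: rather than verifying $m(\chi) \leq n-1$ for all $\chi \in \Irr(G_n)$ as you suggest, the paper gets $d(H_n,G_n) \leq 2n$ for free from Theorem~\ref{inter}, since $\Core_{G_n}(H_n) = N^n$ is an intersection of $n$ conjugates of $H_n$. If you want to salvage your approach, the key change is to redesign $H_n$ so that restriction from $G_n$ branches in only one coordinate at a time; the wreath subgroup $H_0 \wr S_n$ can never do this.
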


%%%%%%%%%%%%%%%%%%%%%%%%%%%%%%%%%%%%%%%%%%%%%%%%%%%%%%%%%%%%%%%%%%%%%%%%%%%%%
\section{Constructing examples}

Examples of subgroups of depth $8$ had been constructed earlier
by the third author with the help of the {\GAP} system~\cite{GAP4},
see~\cite{Birmingham2017}.
Let us mention some of them:
\begin{itemize}
\item{}
    $d(A_{15}\cap (S_{12}\times S_3),A_{15})=8$,
\item{} $d((2^6:U_4(2)),O_8^{-}(2))=8$, and
\item{}
$d(((((C_2\wr C_2)\wr C_2)\wr C_2)\cap A_8\times A_8),((C_2\wr C_2)\wr C_2)\wr C_2))=8$.
\end{itemize}

It was shown already in \cite{BKK} that $d(D_8,S_4)=4$.
The first author found with GAP that\\
 $d(D_8\times S_4,S_4\wr C_2)=8$.
Continuing this process, we  obtained that\\
 $d((D_8\times S_4)\times (S_4\wr C_2),(S_4\wr C_2)\wr C_2))=16$ and in general

\begin{itemize}
\item{}
$G_0:=S_4$, $H_0:=D_8$,
\item{} $G_n:=G_{n-1}\wr C_2$,
$H_n:=H_{n-1}\times G_{n-1}<G_{n-1}\times G_{n-1}<G_n$,
\item{} $d(H_n,G_n)=2^{n+2}$.
\end{itemize}
 The idea of the proof is to use Thm.~\ref{inter} to prove that $d(H_n,G_n)\leq 2^{n+2}$. Then we show that  the depth cannot be at most  $2^{n+2}-1=2(2^{n+1}-1)+1$, since by Cor.~\ref{distance} then the distance of any two characters of $H_n$ would be at most $2^{n+1}-1$, however there are
irreducible characters of $H_n$ of distance exactly $2^{n+1}$.
The proof is a rather complicated induction, see \cite{JBH}.

We wanted to simplify the construction. Our aim was also to construct as depth more even numbers. We can generalize the first two steps of
the former construction in another way as follows:
\begin{itemize}
\item{} $d(D_8,S_4)=4,$
\item{} $d(D_8\times S_4,S_4\wr C_2)=8,$
\item{} $d(D_8\times S_4\times S_4,S_4\wr C_3)=12.$
%\item{} $d(D_8\times,S_4\times S_4\times S_4,S_4\wr C_4)=16$
\end{itemize}

In general, we have that

\begin{itemize}
\item{} $G_1:=S_4$,  $H_1:=D_8$,
\item{} $G_n:=G_1\wr C_n$,
\item{} $H_n:=H_1\times G_1^{n-1}<G_1^n<G_n$.
\end{itemize}

Then we have that $d(H_n,G_n)=4n$. The proof is again using Thm. \ref{inter}
to prove that $d(H_n,G_n)\leq 4n$. If $d(H_n,G_n)\leq 4n-1=2(2n-1)+1$, then
by Cor. \ref{distance}
any two irreducible characters of $H_n$ have distance at most  $2n-1$. However,  one can show that there exist irreducible characters of $H_n$ of distance $2n$.

If we want to get every even number then we can use a modified  construction.
We may take the Klein four group $V_4\triangleleft S_4$ instead of $D_8$
and get:

\begin{itemize}
\item{} $d(V_4,S_4)=2,$
\item{} $d(V_4\times S_4,S_4\wr C_2)=4,$
\item{} $d(V_4\times S_4\times S_4,S_4\wr C_3)=6.$
\end{itemize}

In general, we have a series  of groups and subgroups such that
$d(H_n, G_n) = 2n$ holds. The idea of the proof will be the same as before,
for the inequality  we will use again Thm.~\ref{inter}, and to prove that
it cannot be a strict inequality,
we find two irreducible characters of distance $n$ in $H_n$.
For that, we consider suitable characters of the base group of the wreath product and define a Cartesian product of graphs that encodes the relation $\sim$.

%%%%%%%%%%%%%%%%%%%%%%%%%%%%%%%%%%%%%%%%%%%%%%%%%%%%%%%%%%%%%%%%%%%%%%%%%%%%%
\section{Proof of Theorem~\ref{2n}}

Let $G$ be the symmetric group on four points,
and $N$ be its normal Klein four subgroup.
Set $G_1 = G$, $H_1 = N$.
Then $d(H_1, G_1) = 2$, by Cor.~\ref{distance}.
Define for $n \geq 2$
\begin{eqnarray*}
   \sigma_n & = & \prod_{j=1}^4 (j, j + 4, j+8, \cdots, j+4(n-1)), \\
   G_n      & = & \langle G, \sigma_n \rangle, \\
   H_n      & = & \langle N, G^{\sigma_n}, G^{\sigma_n^2}, \ldots,
                             G^{\sigma_n^{n-1}} \rangle.
\end{eqnarray*}

Let $C_n$ denote the cyclic group of order $n$.
Then $H_n < G_n \cong G \wr C_n$
and
\[
   H_n \cong N \times G^{n-1} \leq G^n < G \wr C_n.
\]
Let $N_n = \Core_{G_n}(H_n)$, the largest normal subgroup of $G_n$ that is
contained in $H_n$.
Then $N_1 = N$, and
\[
   N_n = \langle N, N^{\sigma_n}, \ldots, N^{\sigma_n^{n-1}} \rangle
       = \bigcap_{i=0}^{n-1} H_n^{\sigma_n^i}
\]
is an intersection of $n$ conjugates of $H_n$,
and Thm.~\ref{inter} yields $d(H_n, G_n) \leq 2 n$.
Set
\[
   K_n = \langle G, G^{\sigma_n}, \ldots, G^{\sigma_n^{n-1}} \rangle
       \leq G_n.
\]
Then $H_n \leq K_n \cong G^n$.

The character tables of $N$ and $G$ are as follows,
where the columns are indexed by the conjugacy classes
of the elements
$g_1 = ()$,
$g_2 = (1,3)(2,4)$,
$g_3 = (1,2)(3,4)$,
$g_3' = (1,2,3)$,
$g_4 = (1,4)(2,3)$,
$g_4' = (1,3)$,
$g_5 = (1,2,3,4)$.
\[
  \begin{array}{l|rrrrr}
          & g_1 & g_2 & g_3 & g_4  \\
    \hline
                                      \nu_1 & 1 &  1 &  1 &  1  \\
    \nu_2 & 1 &  1 &  -1 & -1  \\
    \nu_3 & 1 &  -1 & 1 &  -1  \\
    \nu_4 & 1 &  -1 & -1 & 1
  \end{array}
  \ \ \ \
  \begin{array}{l|rrrrr}
           & g_1 & g_2 & g_3' & g_4' & g_5 \\
    \hline
    \chi_1 & 1 &  1 &  1 &  1 &  1 \\
    \chi_2 & 1 &  1 &  1 & -1 & -1 \\
    \chi_3 & 2 &  2 & -1 &  0 &  0 \\
    \chi_4 & 3 & -1 &  0 &  1 & -1 \\
    \chi_5 & 3 & -1 &  0 & -1 &  1
  \end{array}
  \ \ \ \
  \begin{array}{rcl}
\chi_1|_N & = & \nu_1 \\
   \chi_2|_N & = & \nu_1 \\
   \chi_3|_N & = & 2 \nu_1  \\
   \chi_4|_N & = & \nu_2 + \nu_3 + \nu_4 \\
   \chi_5|_N & = & \nu_2 + \nu_3 +\nu_4
  \end{array}
\]

Set
\[
   X_n = \left\{ \chi_{i_1} \times \chi_{i_2} \times \cdots \times \chi_{i_n}
                 \in \Irr(K_n); i_1 \in \{ 4, 5 \},
                                i_j \in \{ 1, 2, 3 \} \mbox{\textrm{\ for\ }}
                 2 \leq j \leq n \right\}
\]
and
\[
   Y_n = \left\{ \chi^{G_n}; \chi \in X_n \right\}.
\]
Let $\Gamma_1$ be the undirected graph with vertex set $\{ 4, 5 \}$
and edge set $\{ \{ 4, 5 \} \}$,
$\Gamma_0$ be the undirected graph with vertex set $\{ 1, 2, 3 \}$
and edge set $\{ \{ 1, 3 \}, \{ 2, 3 \}, \{1,2\} \}$.
For $n \geq 2$, let $\Gamma_n$ be the Cartesian product of $\Gamma_1$
and $n-1$ copies of $\Gamma_0$,
that is,
$\Gamma_n$ has vertex set
\[
   \left\{ (i_1, i_2, \ldots, i_n);
          i_1 \in \{ 4, 5 \},
           i_j \in \{ 1, 2, 3 \} \mbox{\textrm{\ for\ }}
                 2 \leq j \leq n \right\},
\]
and there is an edge between $(i_1, i_2, \ldots, i_n)$ and
$(i_1', i_2', \ldots, i_n')$ if and only if there is a (unique) $j$
such that $i_k = i_k'$ for $k \not= j$ and $i_j \not= i_j'$
and either $\{ i_j, i_j' \} = \{ 4, 5 \}$
or $\{ i_j, i_j' \} \subset \{ 1, 2, 3 \}$.

\begin{lem}\label{lemma:graph}
\begin{itemize}
\item[(i)]
   $Y_n \subseteq \Irr(G_n)$,
   and mapping $\chi$ to $\chi^{G_n}$ defines a bijection from $X_n$ to $Y_n$.
\item[(ii)]
   For $\psi \in Y_n$ and $\psi' \in \Irr(G_n)$,
   if $\psi|_{H_n}$ and $\psi'|_{H_n}$ have a common constituent
   then $\psi' \in Y_n$.
\item[(iii)]
   Let $\psi = \chi^{G_n}$, $\psi' = (\chi')^{G_n}$ for $\chi, \chi' \in X_n$,
   with $\psi \not= \psi'$.
   Then $\psi|_{H_n}$ and $\psi'|_{H_n}$ have a common constituent
   if and only if there is an edge between $(i_1, i_2, \ldots, i_n)$ and
 $(i_1', i_2', \ldots, i_n')$ in $\Gamma_n$,
   where $\chi = \chi_{i_1} \times \chi_{i_2} \times \cdots \times \chi_{i_n}$
                                and
   $\chi' = \chi_{i_1'} \times \chi_{i_2'} \times \cdots \times \chi_{i_n'}$.
\item[(iv)]
   The distance of the vertices $(4, 1, 1, \ldots, 1)$ and
   $(4, 2, 2, \ldots, 2)$ of $\Gamma_n$ is $n-1$.
\item[(v)]
   The distance $d_{G_n}(\alpha_n, \omega_n)$ of the characters
   $\alpha_n:= \nu_2 \times \chi_1 \times \cdots \times \chi_1$
and\\  $\omega_n:= \nu_2 \times \chi_2 \times \cdots \times \chi_2$
   of $H_n$ is $n$.
\end{itemize}
\end{lem}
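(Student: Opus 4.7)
The five parts will be proved together using Clifford theory for the normal inclusion $K_n \triangleleft G_n$, with $C_n$ acting on $K_n$ by cyclic permutation of the $n$ factors. For (i), every tuple in $X_n$ has its position 1 in $\{4,5\}$ and every other position in $\{1,2,3\}$, so no nontrivial rotation fixes such a tuple; this trivial $C_n$-stabilizer makes $\chi^{G_n}$ irreducible and shows that distinct tuples in $X_n$ lie in distinct $C_n$-orbits, which yields the bijection to $Y_n$. Part (iv) is pure graph theory: in a Cartesian product the distance is the sum of coordinate-wise distances, and $(4,1,\ldots,1),(4,2,\ldots,2)$ agree in coordinate 1 and differ in each of the remaining $n-1$ coordinates, where the triangle $\Gamma_0$ has distance $1$ between $1$ and $2$.

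For (iii) I would compute $\chi^{G_n}|_{H_n}$ explicitly for $\chi = \chi_{i_1}\times\cdots\times\chi_{i_n}\in X_n$: since $\chi^{G_n}|_{K_n}$ is the sum of the $n$ cyclic shifts of $\chi$, restricting each shift further to $H_n = N\times G^{n-1}$ using the character table of the setup yields exactly two families of constituents. The \emph{A-type} constituent $\nu_k\times\chi_{i_2}\times\cdots\times\chi_{i_n}$ for $k\in\{2,3,4\}$ comes from the identity shift, while each of the $n-1$ nontrivial shifts produces a \emph{B-type} constituent $\nu_1\times(\textrm{cyclic rearrangement})$ in which $\chi_{i_1}\in\{\chi_4,\chi_5\}$ occupies a unique position $p\in\{2,\ldots,n\}$. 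A short case analysis then shows that $\chi,\chi'\in X_n$ share an A-type constituent iff $(i_2,\ldots,i_n)=(i'_2,\ldots,i'_n)$, and share a B-type constituent iff $i_1=i'_1$ while $(i_2,\ldots,i_n)$ and $(i'_2,\ldots,i'_n)$ differ in at most one coordinate. These conditions together characterize $\Gamma_n$-adjacency.

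For (ii) the key is that every A- or B-type constituent has a canonical source in $X_n$: an A-type constituent forces the $K_n$-constituent of $\psi'|_{K_n}$ to have first factor in $\{\chi_4,\chi_5\}$ with remaining factors uniquely determined, placing it in $X_n$ directly; a B-type constituent has a unique $\{\chi_4,\chi_5\}$-factor at a definite position $p\ge2$, so the $K_n$-constituent is determined up to a unique cyclic shift which moves that factor to position 1 and puts an orbit representative in $X_n$. In either case $\psi'|_{K_n}$ contains an $X_n$-element with trivial stabilizer, so $\psi'$ is its induction and lies in $Y_n$.

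Part (v) has a straightforward upper bound and a delicate lower bound. For the upper bound, part (iv) yields a path of $n$ vertices $\psi_0,\ldots,\psi_{n-1}$ in $\Gamma_n$ from $(4,1,\ldots,1)$ to $(4,2,\ldots,2)$; $\alpha_n$ is an A-type constituent of $\psi_0|_{H_n}$, $\omega_n$ is an A-type constituent of $\psi_{n-1}|_{H_n}$, and by (iii) each consecutive pair $\psi_i,\psi_{i+1}$ shares some $\beta_{i+1}\in\Irr(H_n)$, giving a chain $\alpha_n\sim\beta_1\sim\cdots\sim\beta_{n-1}\sim\omega_n$ of length $n$. For the lower bound, consider any chain $\alpha_n=\eta_0\sim\eta_1\sim\cdots\sim\eta_m=\omega_n$ witnessed by $\psi_0,\ldots,\psi_{m-1}\in\Irr(G_n)$. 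The computation in (iii) shows that the A- and B-type characters of $H_n$ are precisely the constituents of characters in $Y_n$; combined with (ii) this gives an induction: $\eta_0=\alpha_n$ is A-type, hence $\psi_0\in Y_n$, its constituents are A- or B-type, hence $\eta_1$ is A- or B-type, hence $\psi_1\in Y_n$, and so on. Therefore $\psi_0,\ldots,\psi_{m-1}$ is a walk in $\Gamma_n$ from a tuple of shape $(a,1,\ldots,1)$ to one of shape $(b,2,\ldots,2)$, whose $\Gamma_n$-distance is at least $n-1$ by (iv); thus $m-1\ge n-1$, and the distance is at least $n$. The main obstacle is precisely this closure step: showing that B-type constituents (which start with $\nu_1$ and so a priori might be shared with characters outside $Y_n$) are in fact contained only in characters from $Y_n$, which is the content of (ii) and relies on the canonical-source analysis above.
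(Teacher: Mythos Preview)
Your proof is correct and follows essentially the same strategy as the paper: Clifford theory for $K_n \triangleleft G_n$ to establish~(i), an explicit description of the $H_n$-constituents of $\chi^{G_n}$ for $\chi\in X_n$ to prove (ii) and (iii), the Cartesian-product distance formula for~(iv), and the combination of (ii)--(iv) to bound the character distance in~(v). Your A-type/B-type bookkeeping is a clean way to package what the paper calls constituents ``of the first kind'' and ``of the second kind''; for part~(ii) you argue by looking at which $K_n$-irreducibles of $\psi'|_{K_n}$ can lie over a given $H_n$-constituent, whereas the paper induces the $H_n$-constituent up to $K_n$ and then to $G_n$, but these are Frobenius-reciprocal versions of the same computation.
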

                           
\begin{proof}
Let $\psi = \chi^{G_n}$, where
$\chi = \chi_{i_1} \times \chi_{i_2} \times \cdots \times \chi_{i_n} \in X_n$,
that is, $\chi_{i_1}$ is faithful and the other $\chi_{i_j}$ are not.

For part~(i),
$\chi$ has inertia subgroup $K_n$ inside $G_n$.
Hence by Clifford's Theorem, see~\cite[Thm.~6.11]{Isa},
$\chi^{G_n}$ is irreducible.
The irreducible constituents of the restriction $\psi|_{K_n}$ are
the $n$ conjugates of $\chi$ by $\sigma_n$, i.~e.,
those characters where the $n$ components of $\chi$ are cyclically permuted.
Thus each constituent has exactly one faithful component.
Hence $\chi$ is the only constituent of $\psi|_{K_n}$ that lies in $X_n$.
Thus we get an inverse to the map $\chi\mapsto \chi^{G_n}$.

For part~(ii),
consider the restriction of the constituents of $\psi|_{K_n}$ to $H_n$.
We get irreducible constituents where the first component is a nontrivial
character of $N$
and all other components are non-faithful characters of $G$,
and irreducible constituents where the first component is the trivial
character of $N$ and exactly one other component is faithful.
Let $\psi' \in \Irr(G_n)$ have the property that $\psi'|_{H_n}$ and
$\psi|_{H_n}$ have a common irreducible constituent, which means that $0\ne(\psi|_{H_n},\psi'|_{H_n})=((\psi|_{H_n})^{G_n},\psi')$.
If this constituent is of the first kind then inducing it to $K_n$
yields a character with first component $\chi_4 + \chi_5$ and all other
components non-faithful.
If the common constituent is of the second kind then inducing it to $K_n$
yields a character with first component $\chi_1 + \chi_2 + 2 \chi_3$
and exactly one other component faithful. (Here we used that $(\mu \times \theta_2\times \cdots \times \theta_n)^{K_n}=(\mu^G\times \theta_2\times \cdots \times \theta_n)$, where $\theta_i\in \Irr(G)$, for $i=2\ldots n$, $\mu\in \Irr(N)$.)
 
In both cases, the irreducible constituents are cyclic shifts of characters in $X_n$,
thus inducing further from $K_n$ to $G_n$ yields characters
all whose irreducible constituents lie in $Y_n$.
Now note that $\psi'$ is one of them.

For part~(iii), note that
there is an edge between $(i_1, i_2, \ldots, i_n)$ and
$(i_1', i_2', \ldots, i_n')$ in $\Gamma_n$ if and only if
$\chi:=\chi_{i_1} \times \chi_{i_2} \times \cdots \times \chi_{i_n}$
and $\chi':=\chi_{i_1'} \times \chi_{i_2'} \times \cdots \times \chi_{i_n'}$
differ in exactly one component $\chi_{i_j}$, $\chi_{i_j'}$,
such that $\chi_{i_j}|_N$ and $\chi_{i_j'}|_N$ have a common constituent.
Let $\psi:=(\chi_{i_1} \times \chi_{i_2} \times \cdots \times \chi_{i_n})^{G_n}$,
and $\psi':=(\chi_{i_1'} \times \chi_{i_2'} \times \cdots \times \chi_{i_n'})^{G_n}$. Then $\psi|_{K_n}$ contains as a constituent $\chi_{i_1} \times \chi_{i_2} \times \cdots \times \chi_{i_n}$ and all its cyclic shifts,
$\psi'_{K_n}$ contains as a constituent $\chi_{i_1'} \times \chi_{i_2'} \times \cdots \times \chi_{i_n'}$ and all its cyclic shifts. When restricted further to $H_n$ the scalar product can be nonzero if and only if some of cyclic shifts of $\chi $
and some of cyclic shifts of $\chi'$ have  in the first component a restriction
that have a common component and all other components are equal. But then they must be shifted in the same way, since otherwise the faithful components were in different place.  Thus $\chi_{i_1} \times \chi_{i_2} \times \cdots \times \chi_{i_n}$
and $\chi_{i_1'} \times \chi_{i_2'} \times \cdots \times \chi_{i_n'}$ differ in
 exactly one component $\chi_{i_j}$, $\chi_{i_j'}$,
such that $\chi_{i_j}|_N$ and $\chi_{i_j'}|_N$ have a common constituent.

For part~(iv), observe that any shortest path from $(4, 1, \ldots, 1)$ to
$(4, 2, \ldots, 2)$ in $\Gamma_n$
replaces in each step exactly one $1$ by a $2$.

For part~(v), fix $n$ and let
$\alpha_n \sim_{G_n} \psi_1 \sim_{G_n} \psi_2 \sim_{G_n} \cdots
\sim_{G_n} \psi_m \sim_{G_n} \omega_n$
be a shortest path of related characters in $\Irr(H_n)$, of length $m+1$.
                  This means that there are irreducible characters
$\Phi_1, \Phi_2, \ldots, \Phi_{m+1}$ of $G_n$ such that
$\alpha_n$ and $\psi_1$ are constituents of $\Phi_1|_{H_n}$,
$\psi_i$ and $\psi_{i+1}$ are constituents of $\Phi_{i+1}|_{H_n}$,
for $1 \leq i \leq m-1$, and
$\psi_m$ and $\omega_n$ are constituents of $\Phi_{m+1}|_{H_n}$.
By Frobenius reciprocity we have  that $(\alpha_n^{G_n},\Phi_1)\ne 0$.
Since
$\alpha_n^{K_n} = (\chi_4 + \chi_5)\times\chi_1 \times\cdots\times\chi_1$
is a sum of characters in $X_n$, we know that $\Phi_1 \in Y_n$,
and part~(ii) implies that $\Phi_i \in Y_n$
for all $i \in \{ 1, 2, \ldots, m+1 \}$.
Let $\Theta_i$ be the unique character in $X_n$ with the property
$\Phi_i = \Theta_i^{G_n}$, for $1 \leq i \leq m+1$.
By part~(iii), $\Theta_i$ and $\Theta_{i+1}$ differ in at most one component.
Now $\Theta_1$ has $n-1$ components $\chi_1$,
and $\Theta_{m+1}$ has $n-1$ components $\chi_2$,
thus $m \geq n-1$ holds.
Conversely, any path of length $n - 1$ between
                                               $(4, 1, 1, \ldots, 1)$ and $(4, 2, 2, \ldots, 2)$ in $\Gamma_n$
yields a path of related characters from $\alpha_n$ to $\omega_n$,
of length $n$, hence $m+1 = n$.
\end{proof}

In order to prove that $d(H_n, G_n) = 2 n$,
it remains to show that  $d(H_n, G_n) \geq 2 n$ holds.
 If $d(H_n,G_n)\leq 2n-1=2(n-1)+1$, then
by Cor. \ref{distance}~(i) we have that every two irreducible characters of $H_n$ have distance  at most $n-1$.
However,
the characters $\alpha_n$ and $\omega_n$ constructed in
Lemma~\ref{lemma:graph} have  distance $n$, which is a contradiction. So we are done.

%%%%%%%%%%%%%%%%%%%%%%%%%%%%%%%%%%%%%%%%%%%%%%%%%%%%%%%%%%%%%%%%%%%%%%%%%%%%%

\begin{center}
 \textbf{Acknowledgments}
\end{center}

\noindent
The first author was supported by the
Stipendium Hungaricum PhD fellowship
at the Budapest University of Technology and Economics.
The second author gratefully acknowledges support by the German Research
Foundation (DFG) within the SFB-TRR 195
\textit{Symbolic Tools in Mathematics and their Applications}.
The third author was supported by the NKFI-Grants No.~115288 and 115799.

%%%%%%%%%%%%%%%%%%%%%%%%%%%%%%%%%%%%%%%%%%%%%%%%%%%%%%%%%%%%%%%%%%%%%%%%%%%%%
\bibliographystyle{amsalpha}

\providecommand{\bysame}{\leavevmode\hbox to3em{\hrulefill}\thinspace}
\providecommand{\MR}{\relax\ifhmode\unskip\space\fi MR }
\renewcommand{\MR}[1]{}
\providecommand{\MRhref}[2]{%
  \href{http://www.ams.org/mathscinet-getitem?mr=#1}{#2}
}
\providecommand{\href}[2]{#2}

\vfill
\noindent
Hayder Abbas Janabi,
Department of Algebra,
Budapest University of Technology and Economics,
H-1111 Budapest,
M\H{u}egyetem rkp.~3--9,
Hungary, \\
Department of Basic Sciences,
Faculty of Dentistry,
University of Kufa,
Najaf, Iraq,
e-mail: \texttt{haydera.janabi@uokufa.edu.iq} \\
\\
\noindent
Thomas Breuer,
Lehrstuhl f\"{u}r Algebra und Zahlentheorie,
RWTH Aachen University,
Pontdriesch 14--16,
D-52062 Aachen,
Germany,
e-mail: \texttt{sam@math.rwth-aachen.de} \\
\\
\noindent
Erzs\'ebet Horv\'ath,
Department of Algebra,
Budapest University of Technology and Economics,
H-1111 Budapest,
M\H{u}egyetem rkp.~3--9,
Hungary,
e-mail: \texttt{he@math.bme.hu}

\end{document}